\newcommand{\OB}{Ob{\l}{\'o}j}
\newtheorem{theorem}{Theorem}
\newtheorem{corollary}[theorem]{Corollary}
\newtheorem{definition}[theorem]{Definition}
\newtheorem{lemma}[theorem]{Lemma}
\newtheorem{proposition}[theorem]{Proposition}
\theoremstyle{remark}
\newtheorem{remark}[theorem]{Remark}
\newtheorem{example}[theorem]{Example}
\newcommand{\E}{\mathbb{E}}
\renewcommand{\P}{\mathbb{P}}
\newcommand{\R}{\mathbb{R}}
\newcommand{\be}{\begin{equation}}
\newcommand{\ee}{\end{equation}}
\DeclareMathOperator{\supp}{supp}
\newcommand{\bea}{\begin{eqnarray}}
\newcommand{\bes}{\begin{subequations}}
\newcommand{\ees}{\end{subequations}}
\newcommand{\bgt}{\begin{gather}}
\newcommand{\egt}{\begin{gather}}
\newcommand{\eea}{\end{eqnarray}}
\newcommand{\beaa}{\begin{eqnarray*}}
\newcommand{\eeaa}{\end{eqnarray*}}
\renewcommand{\epsilon}{\varepsilon}
\newcommand{\fourIdx}[5]{%
\setbox1=\hbox{\ensuremath{^{#1}}}%
 \setbox2=\hbox{\ensuremath{_{#2}}}%
 \setbox5=\hbox{\ensuremath{#5}}%
 \hspace{\ifnum\wd1>\wd2\wd1\else\wd2\fi}%
 \ensuremath{\copy5^{\hspace{-\wd1}\hspace{-\wd5}#1\hspace{\wd5}#3}%
 _{\hspace{-\wd2}\hspace{-\wd5}#2\hspace{\wd5}#4}%
 }}
\numberwithin{equation}{section}
\numberwithin{theorem}{section}
\renewcommand{\subset}{\subseteq}
\numberwithin{equation}{section}
\theoremstyle{plain}
\begin{document}

\begin{frontmatter}
\title{Optimal martingale transport between radially symmetric
marginals in general dimensions
}
\runtitle{Martingales between radially symmetric
marginals}

\begin{aug}
\author{\fnms{Tongseok} \snm{Lim}\thanksref{t1}
\ead[label=e1]{dongseok0213@gmail.com}}
\ead[label=e2]{lds@math.ubc.ca}

\thankstext{t1}{The author gratefully acknowledges support from a doctoral graduate fellowship from the University of British Columbia, and from the Austrian Science Foundation (FWF) through grant Y782. \copyright 2016 by the author.
}
\runauthor{Tongseok Lim} 

\affiliation{Vienna University of Technology}
\address{Mathematical Stochastics, TU Vienna\\
1040 Wien, Austria\\
\printead{e1}}

\end{aug}

\begin{abstract}
We determine the optimal structure of couplings for the \emph{Martingale transport problem} between radially symmetric initial and terminal laws $\mu, \nu$ on $\R^d$ and show the uniqueness of optimizer. Here optimality means that such solutions will minimize the functional $\E f(||X-Y||)$ where $f$ is concave and strictly increasing, and the dimension $d$ is arbitrary. 
\end{abstract}

\begin{keyword}[class=MSC]
\kwd[Primary ]{60G40}
\kwd{60G42}
\kwd[; secondary ]{49K30}
\end{keyword}

\begin{keyword}
\kwd{Optimal Transport, Martingale, Monotonicity, Radial symmetry}
\end{keyword}

\end{frontmatter}

\section{Optimal transport problem and its variant} 
\subsection{Optimal transport problem}
This paper focuses on the structure of probability measures which solve certain optimization problems. The prototype is the optimal mass transport problem: for a given cost function $c:\R^d\times \R^d \to \R$ and two Borel probability measures $\mu, \nu$ on $\R^d$, we consider:
\begin{equation}\label{TP}
\mbox{Minimize} \,\,\, \text{cost}[\pi] = \int_{\R^d\times \R^d} c(x,y) \,d\pi(x,y)
\end{equation}
over all $\pi\in \Pi(\mu,\nu)$, where $\Pi(\mu,\nu)$ is the set of {\it mass Transport Plans}, or {\it couplings}, i.e. the set of probabilities $\pi$ on $\R^d \times \R^d$ with marginals $\mu$ and $\nu$ on $\R^d$. We interpret the transport plan $\pi$ as follows: for $A,B \subset \R^d$, $\pi(A \times B)$ is the amount of mass transported by the plan $\pi$ from the resource domain $A$ to the target range $B$.  
An equivalent probabilistic formulation is to consider the following problem:
\begin{align}\label{tp}
\mbox{Minimize}\quad  
\E_{\rm P} \,c(X,Y)
\end{align}
over all joint random variables $(X,Y): \Omega \to \R^d \times \R^d$ with given laws $X \sim \mu$ and $Y \sim \nu$ respectively. \\

In 1781, Gaspard Monge \cite{Mo1781}  formulated the following question that was relevant to his work in engineering: Given
two sets $U, V$ in $\R^d$ of equal volume, find the optimal volume-preserving map between
them, where optimality is measured against the cost function $c(x,y)$ of transporting particle $x$ to $y$. The optimal map should then minimize the total
cost of redistributing the mass of $U$ through $V$. Much later,  Kantorovich generalized the Monge problem and proposed the above formulation.


In Monge's original problem \cite{Mo1781}, the cost was simply the Euclidean distance $c(x,y) = |x-y|$. Even for this seemingly simple case, it took two centuries before Sudakov \cite{sudakov}, Evans \cite{evans}, Gangbo-McCann \cite{gm}, Ambrosio-Kirchheim-Pratelli \cite{akp, ap}, Caffarelli-Feldman-McCann \cite{cfm}, Bianchini-Cavalletti \cite{bc}, Ma-Trudinger-Wang \cite{MTW, TW1, TW2} and others showed rigorously that an optimal transport map exists. For general account of the theory, see Villani \cite{Vi03, Vi09}.

More recently, a new direction emerged where  the transport plans are assumed to be martingales.  In the sequel, we shall describe the problem, its motivation, and our contributions.

\subsection{Martingale optimal transport problem}
Now we consider the following problem
\begin{equation}\label{MGTP}
\mbox{Minimize}\quad \text{Cost}[\pi] = \int_{\R^d \times \R^d} c(x,y)\,d\pi(x,y)\quad\mbox{over}\quad \pi\in {\rm MT}(\mu,\nu)
\end{equation}
where {\rm MT}$(\mu,\nu)$ (Martingale Transport plan) is the set of joint probabilities on $\R^d \times \R^d$ having $\mu$ and $\nu$ as its marginals, such that for $\pi\in$ {\rm MT}$(\mu,\nu)$, its disintegration $\pi_x$ has its barycenter at $x$. In other words, for any convex function $\xi$ on $\R^d$, disintegration measure $(\pi_x)_x$ with respect to $\mu$ must satisfy
\begin{align}\label{constraint}
\xi(x) \le \int \xi(y)\,d\pi_x (y) \quad  \text{for} \quad \mu-a.e. \, x.
\end{align} 
We interpret  disintegration as conditional probability 
$$d\pi_x (y) = \P(Y=y|X=x).$$

Probabilistic description of the problem is the following: we study 
\begin{align}\label{opt}
\text{Minimize}\quad  
\E_{\rm P} \,c(X,Y)
\end{align}
over all martingales $(X,Y)$ on a probability space $(\Omega, {\mathcal F}, P)$ into $\R^d \times \R^d$ (i.e. $E[Y|X]=X$) with prescribed laws $X \sim \mu$ and $Y \sim \nu$.\\

It is shown in \cite{St65} that {\rm MT}$(\mu, \nu)$ is nonempty if and only if $\mu$ and $\nu$ are in convex order.
\begin{definition}\label{ConvexOrderDef}
Measures $\mu$ and $\nu$ are said to be in convex order if
\begin{enumerate}
\item they have finite mass and finite first moments,
\item for convex functions $\xi$ defined on $\R^d$, $\int \xi\,d\mu\leq \int \xi\, d\nu.$
\end{enumerate}
In that case we will write $\mu\leq_c\nu$. 
\end{definition}

Note that measures $\mu, \nu$ in $\R^1$ having the same finite mass and the same first moments are in convex order if and only if $ \int (x-k)_+ \, d \mu(x)\leq \int (x-k)_+\, d \nu(x)$ for all real $k$. Also note that with this notation, \eqref{constraint} can be written as $\delta_x \le_c \pi_x$. \\


  Along with D. Hobson's pioneering observation of the importance of Skorokhod embedding techniques  in the ``model-free" approach to finance and asset pricing \cite{Ho11}, much related research  has been done in the context of Skorokhod embedding and Martingale optimal transport; e.g.  Beiglb{\"o}ck-Henry-Labordere-Penkner \cite{BeHePe11}, Beiglb{\"o}ck-Henry-Labordere-Touzi \cite{BeHeTo}, Beiglb{\"o}ck-Juillet \cite{bj}, Beiglb{\"o}ck-Nutz-Touzi \cite{bnt}, Hobson-Klimmek \cite{HoKl12}, Hobson-Neuberger \cite{HoNe11} for discrete time case and Beiglb{\"o}ck-Cox-Huesmann \cite{bch}, Dolinsky-Soner \cite{ds1}, Galichon-Henry-Labordere-Touzi \cite{GaHeTo11}, Guo-Tan-Touzi \cite{GTT} for continuous time case.  By no means this  list completely represents the rapidly expanding subject. For the link between MOT, SEP and model-free approach to finance, we refer the reader to Henry-Labordere \cite{HL}, Hobson \cite{Ho98}, Ob{\l}{\'o}j \cite{Obloj}.

We note that the above cited papers are all concerned with dimension one. In this paper, we show that the optimal martingale problem has a unique solution in case the marginals $\mu$, $\nu$ are radially symmetric in arbitrary dimensions, which is satisfied by important distributions such as Gaussians. To the best of the author's knowledge, this is the first such result to be established in arbitrarily high dimensions, along with a companion paper \cite{GKL2} which deals with the
general marginals case. In view of the fact that the optimal transport theory \eqref{TP} in higher dimension has had a profound influence on many areas of mathematics, physics and economics, we hope that the theory of martingale optimal transport in higher dimension will also find many important applications.

In this paper, we will focus on the cost function (note $x,y \in \R^d$)
\begin{align}\label{cost}
c(x,y) = f(|x-y|), \text{ where } f(0)=0, f' >0, f'' \le 0 \text{ on } \R_+.
\end{align}
That is, $f : \R_+ \to \R_+$ is strictly increasing and concave. The power cost $c(x,y) = |x-y|^p$, $0<p \le 1$, is a particular example.  We note that as our cost function is lower-semicontinuous and nonnegative, solutions to \eqref{MGTP} exist; see, e.g. \cite[Theorem 1]{BeHePe11} (We note that \cite[Theorem 1]{BeHePe11} is stated in one-dimensional marginals case. But the existence of solutions to \eqref{MGTP} follows from the standard argument, i.e. from the compactness and nonemptiness of ${\rm MT}(\mu,\nu)$ with the lower-semicontinuity of the cost functional on it. For this, the proof in \cite[Proposition 2.4]{BeHePe11} works just as well for $d$-dimensional setting).

Now we introduce the main theorem.

\begin{theorem}\label{mainresult} Suppose that $\mu, \nu$ are radially symmetric probability measures on $\R^d$ which are in convex order and  $(\mu-\mu\wedge \nu)(\{0\} )= 0$. Assume that either $\mu$ is absolutely continuous, or that there exists a ball $B_r$  (with center $0$ and radius $r$, either open or closed) such that $\mu-\mu\wedge \nu$ is concentrated on $B_r$  while $\nu-\mu\wedge \nu$ is concentrated on $\R^d \setminus B_r$. Then there is a unique minimizer $\pi$ for the problem \eqref{MGTP} with respect to the cost \eqref{cost}, and for $\mu$ almost every $x$,  disintegration $\pi_x$ is concentrated on the one-dimensional subspace $L_x = \{ax \,|\, a \in \R\}$. Furthermore, if $\mu$ is absolutely continuous with respect to Lebesgue measure and $\mu \wedge \nu =0$, then $\pi_x$ is supported at two points on $L_x$.
\end{theorem}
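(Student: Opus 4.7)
The strategy is to reduce the $d$-dimensional problem to a one-dimensional martingale optimal transport problem on each ray through the origin, by showing that any optimizer must be supported on $\bigcup_{x}\{x\}\times L_x$; uniqueness and the two-atom structure will then follow from known one-dimensional theory. Existence of a minimizer $\pi$ is standard: $MT(\mu,\nu)$ is nonempty (since $\mu\le_c\nu$), weakly compact via finite-first-moment tightness, and $\pi\mapsto\int|x-y|^p\,d\pi$ is lower semicontinuous, so I would begin there. To exploit radial symmetry, I would then observe that if $\pi$ is optimal, so is the rotation-averaged plan $\pi^{\mathrm{sym}}:=\int_{SO(d)}(R\times R)_*\pi\,dR$, because $|x-y|^p$, $\mu$, and $\nu$ are all $O(d)$-invariant and the martingale property is preserved under averaging.

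The heart of the proof is to show that any rotation-invariant optimizer $\pi$ satisfies $\pi_x(L_x)=1$ for $\mu$-a.e.\ $x$. Fixing $x=re_1$, I would write the disintegration $\pi_{re_1}$ in terms of the joint law $\kappa_r(d\rho,du)$ of $\bigl(|Y|,\langle Y/|Y|,e_1\rangle\bigr)$, together with a uniform conditional distribution on $S^{d-2}$ for $Y_\perp/|Y_\perp|$; the cost then reads $\int(r^2+\rho^2-2r\rho u)^{p/2}\,\kappa_r(d\rho,du)$. Both the $\nu$-marginal and the martingale conditions are pinned down by the $\rho$-marginal of $\kappa_r$ and the conditional mean $\bar u(\rho):=\int u\,\kappa_r(du\mid\rho)$. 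Holding these two data fixed and varying $\kappa_r(du\mid\rho)$ freely among probability measures on $[-1,1]$ with mean $\bar u(\rho)$, I would use the fact that $u\mapsto (r^2+\rho^2-2r\rho u)^{p/2}$ is strictly concave on $[-1,1]$ for $p\in(0,2)$ (an elementary second-derivative computation) to conclude that the integral is strictly minimized by the two-atom measure $\tfrac{1+\bar u(\rho)}{2}\delta_{1}+\tfrac{1-\bar u(\rho)}{2}\delta_{-1}$. Optimality therefore forces $u\in\{-1,+1\}$ almost surely, i.e.\ $Y\in L_X$.

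To remove the rotation-invariance hypothesis, I would use the disintegration identity $\pi^{\mathrm{sym}}_x=\int_{SO(d)}R_*\pi_{R^{-1}x}\,dR$ with Haar $dR$: if $\pi^{\mathrm{sym}}_x$ is supported on $L_x$ for $\mu$-a.e.\ $x$, then $R_*\pi_{R^{-1}x}$ is supported on $L_x$ for a.e.\ $R$, hence $\pi_z(L_z)=1$ for $\mu$-a.e.\ $z$ for the original $\pi$. Once the transport lives on the rays, the radial symmetry of $\mu,\nu$ collapses the problem to a single one-dimensional martingale transport of $\hat\mu$ on $[0,\infty)$ onto a signed extension of $\hat\nu$ on $\mathbb{R}$, with cost $|r-s|^p$. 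Under either hypothesis of the theorem---$\mu$ absolutely continuous, whence $\hat\mu$ is absolutely continuous, or the ball condition $\mu(B_r)=1$, $\nu(B_r)=0$, which forces $\hat\mu\wedge\hat\nu=0$---standard one-dimensional martingale transport results of Beiglb\"ock--Juillet type produce a unique minimizer, which moreover has the two-atom structure on each $x$ precisely when $\mu\wedge\nu=0$.

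The main obstacle is the structural step: carrying out the strict concavity argument correctly requires a careful disintegration of the rotation-invariant plan around the axis $L_x$ to separate the quantities fixed by the two linear constraints (marginal and martingale) from the conditional law $\kappa_r(du\mid\rho)$ that remains free, and verifying that the proposed two-atom replacement genuinely preserves the $\nu$-marginal and the martingale condition. The strict concavity of $u\mapsto(r^2+\rho^2-2r\rho u)^{p/2}$, which holds throughout $p\in(0,1]$, is what powers both the structural assertion and the uniqueness: whenever $\kappa_r(du\mid\rho)$ is not already supported on $\{-1,+1\}$, the replacement produces a martingale competitor with the same marginals and strictly smaller cost.
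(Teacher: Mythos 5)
Your higher-dimensional reduction is sound and is essentially the paper's own mechanism in different clothing: rotation-averaging an optimizer, replacing the conditional law of the cosine $u=\langle Y/|Y|,e_1\rangle$ given $\rho=|Y|$ by the mean-preserving two-atom measure on $\{\pm1\}$, and using rotation invariance to see that this preserves the $\nu$-marginal is exactly the paper's combination of the Deformation Lemma \ref{variational} with $R$-equivalence and the $\mathcal S$-symmetrization. Your strict concavity of $u\mapsto(r^2+\rho^2-2r\rho u)^{p/2}$ for $p<2$ is precisely the paper's hypothesis that $h'(r)/r$ be strictly decreasing; the paper realizes the improvement by an explicit flow of symmetric point pairs along circles with $C'(t)<0$, while you get it in one shot by Jensen, which is a legitimate and arguably cleaner variant. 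The transfer from $\pi^{\mathrm{sym}}$ back to an arbitrary minimizer via $\pi^{\mathrm{sym}}_x=\int R_*\pi_{R^{-1}x}\,dR$ is also fine (the paper avoids it by deforming an arbitrary minimizer and re-symmetrizing afterwards).

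The genuine gap is in the one-dimensional input, which you dispatch as ``standard Beiglb\"ock--Juillet type results.'' First, \cite{bj} covers $p=1$; for $0<p<1$ the two-point structure and uniqueness under continuous $\hat\mu$ are part of this paper's contribution (Theorem \ref{1d}, proved by extending the forbidden-configuration argument through the derivative computation for $G$ and the monotonicity principle), so they must be proved, not cited. Second, and more seriously, in the ball case $\mu$ may be atomic, and your justification that the ball condition ``forces $\hat\mu\wedge\hat\nu=0$'' does not give uniqueness: for atomic $\hat\mu$, disjointness of the one-dimensional marginals alone is not sufficient, and no standard result asserts uniqueness there. What the hypothesis $\mu(B_r)=1$, $\nu(B_r)=0$ actually buys is the separation property, which yields the global decreasing behaviour of $x\mapsto\supp(\pi^{\pm}_x)$ (Lemma \ref{decrease}) and then uniqueness through the explicit mass/barycenter balance equations determining $S(x),T(x),\lambda^{\pm}(x)$ (Theorem \ref{uniqueness}); this mechanism is absent from your argument. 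Relatedly, the uniqueness statement of Theorem \ref{mainresult} does not presuppose $\mu\wedge\nu=0$, so before invoking one-dimensional results for disjoint marginals one needs the ``common mass stays put'' reduction (Section 2, see Remark \ref{stayput}), which for $p<1$ again is not in \cite{bj} and requires the triangle-inequality argument of the paper.
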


We note that \cite{GKL2} studied the optimal martingale transport problem in general dimensions as well, and they conjectured the following extremal property of minimizers.\\

\noindent {\bf Conjecture:}  {\it  Consider the cost function 
$c(x, y) =|x-y|$  and assume that $\mu$ is absolutely continuous with respect to Lebesgue measure on $\R^d$, and that $\mu \wedge \nu =0$. If $\pi$ is a martingale transport that minimizes $(\ref{MGTP})$, then for $\mu$ almost every x, the support of disintegration $\supp \pi_x$ consists of  $k+1$ points that form the vertices of a $k$-dimensional polytope,  where $k:=k(x)$ is the dimension of the linear span of $\supp \pi_x$. Finally, the minimizing solution is unique.}\\

Therefore, Theorem \ref{mainresult} can be seen as an affirmative answer for the above conjecture when the marginals $\mu$ and $\nu$ are radially symmetric on $\R^d$, and in this case $k(x) \equiv 1$. On the other hand, \cite{GKL2} showed that the conjecture is true under the additional assumption that $\nu$ is supported on a countable set. 

However, a recent work of the author \cite{Lim} presented a counterexample (see \cite[Example 2.9]{Lim}), showing that the assumption $\mu \ll \mathcal{L}^d$ alone is not sufficient. We think  that it is an interesting question to find some sufficient conditions on $\mu,\nu$ which guarantee the conjecture. Or, as commented in \cite{Lim}, for the following (weaker) existence conjecture I do not know a counterexample:\\

\noindent {\bf Conjecture 2:}  {\it  Consider the cost function 
$c(x, y) =|x-y|$  and assume that $\mu$ is absolutely continuous with respect to Lebesgue measure on $\R^d$, and that $\mu \wedge \nu =0$. Then there exists a martingale optimal transport to \eqref{MGTP} which is of polytope-type as described in the above conjecture.\\

The organization of the paper is as follows. In Section 2, we describe the monotonicity principle which was first introduced in \cite{bj} and subsequently generalized in \cite{Z, BeGr} and  \cite{bnt}. Then we establish the stability of the common marginal $\mu \wedge \nu$ under every minimizer of \eqref{MGTP}. In Section 3, we further apply the monotonicity to determine the structure of the minimizer in one dimension. Finally, in Section 4, we establish the deformation lemma and the main theorem which deals with arbitrary dimensions.

\section{Monotonicity principle and stability of $\mu \wedge \nu$ under every minimizer}
An important basic tool in optimal transport is the notion of \emph{$c$-cyclical monotonicity}. A parallel statement was given in \cite{bj}, then was generalized in \cite{Z}, \cite{BeGr}.

\begin{definition}\label{def:competitor}
Let $\sigma$ be a finite measure supported on a finite set $H \subset \R^d \times \R^d$. Let $X_H$ be the orthogonal projection of $H$ onto the first coordinate space $\R^d$. Then we say that $\rho$ is a \emph{competitor} of $\sigma$ if $\rho$ has  the same marginals as $\sigma$ and for each $x \in X_H$,
$\int_{\R^d} y\, d \sigma(x,y)=\int_{\R^d} y\, d \rho(x,y)$.
\end{definition}

\begin{lemma}[Monotonicity principle \cite{bj, Z, BeGr, bnt}]\label{mono}
Assume that $\mu, \nu$ are probability measures in convex order and that $c:\R^d \times \R^d \to \R$ is a Borel measurable cost function. Assume that $\pi\in {\rm MT}(\mu,\nu)$ is an optimal martingale transport plan which  leads to  finite cost.
Then there exists a Borel set $\Gamma \subset \R^d \times \R^d$ with $\pi(\Gamma)=1$ such that the following monotonicity principle holds: \\
If $\sigma$ is a finite measure on a finite set $H \subset \Gamma$, then for every competitor $\rho$ of $\sigma$, we have 
$$\int c\,d\sigma \leq \int c\,d\rho.$$
\end{lemma}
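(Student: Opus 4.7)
The plan is to argue by contradiction: assume $\pi$ is optimal with finite cost but that no Borel $\pi$-full set $\Gamma$ witnesses the stated cyclical monotonicity, and construct from this a strictly cheaper martingale transport plan $\pi'$, contradicting optimality.

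For each $n \geq 2$, I would work in the space of $n$-point configurations $\omega = ((x_1,y_1),\ldots,(x_n,y_n)) \in (\R^d\times\R^d)^n$, with empirical measure $\phi_\omega = \sum_i \delta_{(x_i,y_i)}$. For each fixed $\omega$, the set of competitors of $\phi_\omega$ --- finite measures supported on the $x$-fibres of $\omega$ with the same conditional masses and conditional barycenters as $\phi_\omega$ --- is a convex set varying nicely in $\omega$. The gain
\begin{equation*}
g_n(\omega) \;=\; \int c\,d\phi_\omega \;-\; \inf_{\psi \text{ competitor of } \phi_\omega} \int c\,d\psi
\end{equation*}
is upper semianalytic, so $\mathcal B_n := \{g_n > 0\}$ is universally measurable, and a Jankov--von Neumann selection yields a universally measurable map $\omega \mapsto \psi_\omega$ on $\mathcal B_n$ with $\int c\,d\psi_\omega < \int c\,d\phi_\omega$. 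If every $\mathcal B_n$ were $\pi^{\otimes n}$-null, a Fubini-type argument would produce a Borel set $\Gamma$ of full $\pi$-measure such that no $n$-tuple from $\Gamma$ is improvable --- exactly the sought monotonicity principle. So, assuming failure, $\pi^{\otimes n}(\mathcal B_n) > 0$ for some $n$.

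Converting improvable configurations into an actual improvement of $\pi$ is the main obstacle. Fix the smallest such $n$ and a Borel subset $\mathcal B \subset \mathcal B_n$ of positive $\pi^{\otimes n}$-measure on which $g_n \geq \delta > 0$. Following \cite{bj, Z, BeGr}, I would define a signed modification of $\pi$ by averaging the atomic corrections $\epsilon(\psi_\omega - \phi_\omega)$ against $\pi^{\otimes n}|_{\mathcal B}$, after first smearing each atomic measure against a kernel supported in small neighbourhoods of the $x_i$'s that is dominated by $\pi$ locally. Because every competitor preserves the $x$-marginal, the $y$-marginal, and the fibre barycenters, the integrated modification preserves $\mu$, $\nu$, and the martingale property; for $\epsilon$ small enough, the local mass subtracted at each $x_i$ is dominated by the local mass of $\pi$, so the modified measure $\pi'$ is a bona fide nonnegative martingale transport plan. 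Its cost drops by at least $\epsilon\delta \cdot \pi^{\otimes n}(\mathcal B) > 0$, contradicting optimality. The hard part is precisely this bookkeeping --- universal measurability of the selection, choice of the smearing kernel compatible with all $n$ fibres simultaneously, and the nonnegativity bound --- which is handled carefully in the cited works.
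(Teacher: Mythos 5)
First, note that the paper does not prove Lemma~\ref{mono} at all: it quotes the monotonicity principle from \cite{bj, Z, BeGr} and refers the reader there, so your attempt has to be measured against those proofs. Your sketch reproduces one half of them (positive measure of improvable configurations $\Rightarrow$ a strictly better plan $\Rightarrow$ contradiction), but it assumes away the other, genuinely hard half. The step ``if every $\mathcal B_n$ is $\pi^{\otimes n}$-null, a Fubini-type argument produces a Borel set $\Gamma$ with $\pi(\Gamma)=1$ such that no $n$-tuple from $\Gamma$ is improvable'' is not a Fubini argument, and as a general measure-theoretic implication it is false: a $\pi^{\otimes n}$-null set need not be disjoint from $\Gamma^n$ for \emph{any} set $\Gamma$ of full (or even positive) measure. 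For instance, with $\pi$ Lebesgue measure on $[0,1]$ and $B=\{(x,y):x\neq y,\ x-y\in\Q\}$, $B$ is $\pi\otimes\pi$-null, yet by the Steinhaus theorem every positive-measure $\Gamma$ satisfies $(\Gamma\times\Gamma)\cap B\neq\emptyset$. So passing from ``the bad sets are null'' to the existence of the monotone concentration set $\Gamma$ requires using the special structure of $\mathcal B_n$, and this is precisely where the cited works invest their effort: Zaev obtains $\Gamma$ from a duality theorem for transport with additional linear constraints (in the unconstrained Borel-cost case this goes back to Beiglb\"ock--Goldstern--Maresch--Schachermayer, where $\Gamma$ is essentially the a.e.-convergence set of an $\eps$-optimizing sequence of dual functions), while Beiglb\"ock--Griessler give a direct but substantially more delicate ``finite optimality'' argument. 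Without supplying one of these mechanisms, your proof has a genuine gap.

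On the improvement half, the architecture is right but the bookkeeping you flag as ``the hard part'' is resolved differently, and your proposed fix does not work as stated. Nonnegativity of the modified plan does not come from smallness of $\eps$ or from smearing the atomic corrections over neighbourhoods of the $x_i$'s: an atom at a selected point is never dominated by a nonatomic $\pi$, and smearing only in $x$ still subtracts $y$-atoms from fibres $\pi_{x'}$ that need not carry them (while smearing carelessly also threatens the barycentre bookkeeping). What actually makes the construction work is averaging itself: the subtracted measure is $\int \phi_\omega\, d(\pi^{\otimes n}|_{\mathcal B})(\omega)=\sum_{i=1}^n \proj_i(\pi^{\otimes n}|_{\mathcal B})\le n\,\pi$, so multiplying the correction by $\eps\le 1/n$ already gives a nonnegative measure; fibre-wise one removes and adds equal mass with equal barycentre (competitors preserve both marginals and the conditional barycentres), so $\mu$, $\nu$ and the martingale property survive, and the cost drops by a positive amount. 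Also, the Jankov--von Neumann selection need only produce an $\eps$-improving competitor (no exact minimiser need exist), which is how the measurability issue is handled in \cite{bj, Z, BeGr}. With the smearing removed and the domination argued via projections, that half of your sketch matches the literature; the missing ingredient remains the construction of $\Gamma$ from the nullity of the $\mathcal B_n$'s.
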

The meaning of the monotonicity principle is clear: $\supp (\sigma) \subseteq \Gamma$ means that $\sigma$ is a ``subplan" of the full transport plan $\pi$, and the definition of competitor means that if we change the subplan $\sigma$ to $\rho$, then the martingale structure of $\pi$ is not disrupted. Now if we have $\int c\,d\sigma > \int c\,d\rho$, then we may modify $\pi$ to have $\rho$ as its subplan, achieving less cost, therefore the current plan $\pi$ is not a minimizer. For more details and proofs, see  \cite[Lemma 1.11]{bj} or \cite[Theorem 3.6]{Z}.\\

The following notations are introduced in \cite{bj} and we use them in this paper:\\

\noindent For a set $\Gamma \subset \R^d \times \R^d$, 
we write $X_\Gamma :=$ proj$_X \Gamma$, $Y_\Gamma :=$ proj$_Y \Gamma$, i.e. $X_\Gamma$ is the projection of $\Gamma$ on the first coordinate space $\R^d$, and $Y_\Gamma$ on the second. 
For each $x\in \R^d$, we let
 $\Gamma_x =\{ y \in \R^d \ | \ (x, y) \in \Gamma\}$ and $\Gamma^x =\{ y \in \R^d \ | \ (y, x) \in \Gamma\}$ be the vertical and horizontal slices of $\Gamma$, respectively.
 
  The following definition will be also useful in this paper.
 
 \begin{definition}
 Let $\pi$ be a measure on $\R^d \times \R^d$ and let $\mu, \nu$ be  its marginals. We write $d \pi(x,y) = d \pi_x(y) d\mu(x)$ and $d \pi(x,y) = d \pi^y(x) d\nu(y)$ if $(\pi_x)_{x\in \R^d}$ and $(\pi^y)_{y\in \R^d}$ are  disintegrations of $\pi$ with respect to $\mu$ and $\nu$ respectively. 
  \end{definition}

Now as an application of the monotonicity principle, we prove the stability of $\mu \wedge \nu$ under every optimal martingale transport. \cite{bj} discusses the following theorem in one-dimensional setup with the Euclidean distance cost. We prove it here in general dimension with the class of cost functions \eqref{cost}. Note that radial symmetry of $\mu, \nu$ is not assumed.

\begin{theorem}\label{stability}
Let $\pi$ be any minimizer of the problem \eqref{MGTP} with cost \eqref{cost}. Then  the common mass $\mu \wedge \nu$ is stable under $\pi$, { in the sense that if we define $D: \R^d \to \R^d \times \R^d$ by $D(x) = (x,x)$, then the push-forward measure of $\mu \wedge \nu$ by the map $D$ is dominated by $\pi$, i.e. $D_\# (\mu \wedge \nu) \leq \pi$. } 
\end{theorem}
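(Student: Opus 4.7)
The plan is to argue by contradiction via the monotonicity principle (Lemma \ref{mono}), exploiting that for $0 < p \leq 1$ the $p$-triangle inequality $|z-y|^p \leq |z-x_0|^p + |x_0-y|^p$ is strict in generic configurations. The geometric idea is to short-circuit a two-step route $z \to x_0 \to y$ into a direct $z \to y$, while parking the freed mass at the diagonal point $(x_0, x_0)$.

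First I reduce to an off-diagonal problem. Decompose $\pi = \pi_\Delta + \pi_0$, where $\pi_\Delta = \pi|_\Delta$ is the restriction to the diagonal $\Delta = \{(x,x) : x \in \R^d\}$. Let $\lambda$ denote the common image of $\pi_\Delta$ under either projection, and set $\mu_0 = \mu - \lambda$, $\nu_0 = \nu - \lambda$. Since $c$ vanishes on $\Delta$, $\pi_0 \in MT(\mu_0, \nu_0)$ is optimal for its problem, and the conclusion $D_\#(\mu \wedge \nu) \leq \pi$ is equivalent (given the trivial bound $\lambda \leq \mu \wedge \nu$) to $\mu_0 \wedge \nu_0 = 0$. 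Assume for contradiction that $\mu_0 \wedge \nu_0 \neq 0$ and pick $x_0$ in its support. Let $\Gamma$ be the monotonicity set for $\pi$. Because the off-diagonal conditional $(\pi_0)_{x_0}$ has mean $x_0$ and is supported off $\{x_0\}$, I can find finitely many points $y_1, \dots, y_k \neq x_0$ with $(x_0, y_i) \in \Gamma$ and weights $\alpha_i > 0$, $\sum_i \alpha_i = 1$, such that $\sum_i \alpha_i y_i = x_0$. Similarly, since $x_0 \in \supp \nu_0$, pick $(z, x_0), (z, w) \in \Gamma$ with $z, w \neq x_0$ and $z = \beta x_0 + (1-\beta) w$ for some $\beta \in (0,1)$.

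Form the subplan $\phi = \sum_i \alpha_i \delta_{(x_0, y_i)} + \beta \delta_{(z, x_0)} + (1-\beta) \delta_{(z, w)}$, whose support lies in $\Gamma$. For small $\eta \in (0, \beta)$, define the competitor $\psi$ by (a) reducing each $(x_0, y_i)$ mass by $\alpha_i \eta$ and adding $\eta$ at $(x_0, x_0)$, and (b) reducing $(z, x_0)$ mass by $\eta$ and adding $\alpha_i \eta$ at each $(z, y_i)$. Using $\sum_i \alpha_i y_i = x_0$, one verifies directly that $\psi$ has the same marginals and the same per-source barycenters as $\phi$. A brief calculation yields
\[
\int c\, d\phi - \int c\, d\psi \;=\; \eta \Big[\, |z - x_0|^p - \sum_i \alpha_i \bigl( |z - y_i|^p - |x_0 - y_i|^p \bigr) \Big].
\]
Each difference $|z - y_i|^p - |x_0 - y_i|^p$ is $\leq |z - x_0|^p$ by the $p$-triangle inequality. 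For $p < 1$, strict $p$-subadditivity of $r \mapsto r^p$ on the positive arguments $|z - x_0|$ and $|x_0 - y_i|$ makes every such term strictly less than $|z - x_0|^p$. For the borderline $p = 1$, if every difference equalled $|z - x_0|$ then each $y_i$ would lie on the ray from $z$ through $x_0$ past $x_0$, giving $y_i - x_0 = s_i (x_0 - z)$ with $s_i > 0$; then $0 = \sum_i \alpha_i (y_i - x_0) = \bigl(\sum_i \alpha_i s_i\bigr)(x_0 - z)$ would force $z = x_0$, a contradiction. Hence $\int c \, d\phi > \int c \, d\psi$, contradicting Lemma \ref{mono}.

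\emph{Main obstacle.} The delicate point is the construction of $\psi$: it must simultaneously respect the marginal equations, both per-source barycenter constraints, and non-negativity of mass. The redistribution at source $z$ in precisely the proportions $\{\alpha_i\}$ used at source $x_0$ is exactly what guarantees that the added barycenter contribution at $z$ equals $\eta x_0$, offsetting the removal of $\eta$ units of $(z, x_0)$. A secondary subtlety is the borderline $p = 1$, where strict subadditivity of $r \mapsto r^p$ fails and one needs the geometric argument above in place of a direct inequality.
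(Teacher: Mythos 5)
Your argument is, in substance, the same as the paper's: you locate a point $x_0$ of the common mass at which $\pi$ both sends mass away (to the $y_i$'s) and receives mass from some $z\neq x_0$, build a finitely supported subplan in $\Gamma$ together with a competitor that parks mass on the diagonal at $(x_0,x_0)$ while rerouting mass from $z$ directly to the $y_i$'s in the proportions $\alpha_i$, and conclude from the strict $p$-triangle inequality; the paper phrases exactly this as $\int|x-y|^p\,d\psi_x(y)+|z-x|^p\geq\int|y-z|^p\,d\psi_x(y)$, strict when $\psi_x\neq\delta_x$ has barycenter $x$, and your explicit verification of the competitor property and of strictness in the borderline case $p=1$ is a welcome elaboration of what the paper only asserts. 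Two repairs are worth noting. First, the auxiliary point $(z,w)\in\Gamma$ with $z=\beta x_0+(1-\beta)w$ is neither needed nor always available: for $d\geq2$ the fibre $\Gamma_z$ need not contain a point collinear with $x_0$ and $z$, so this existence claim can fail; but since Definition \ref{def:competitor} only requires the conditional barycenters of $\phi$ to be preserved (not that the conditional at $z$ be centered at $z$), you may simply take $\phi=\sum_i\alpha_i\delta_{(x_0,y_i)}+\delta_{(z,x_0)}$ and run the identical computation, the $(z,w)$ term being inert anyway. Second, ``pick $x_0$ in the support of $\mu_0\wedge\nu_0$'' is slightly too loose (a looseness the paper shares): the disintegration, the concentration $\pi_{x_0}(\Gamma_{x_0})=1$, and the existence of $(z,x_0)\in\Gamma$ with $z\neq x_0$ are only guaranteed almost everywhere, so $x_0$ should be chosen from the intersection of a full $\mu_0$-measure set (where $\pi_{x_0}$ charges $\Gamma_{x_0}\setminus\{x_0\}$) with a full $\nu_0$-measure set (where $x_0$ is the second coordinate of some off-diagonal point of $\Gamma$); these intersect precisely because you have assumed $\mu_0\wedge\nu_0>0$. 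With these cosmetic adjustments your proof is correct and coincides with the paper's.
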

\begin{proof}   Suppose that the theorem is false, so that  there exists a minimizer $\pi$ such that $D_\# (\mu \wedge \nu) \nleq \pi$. Let $\pi_\Delta$ be the restriction of $\pi$ on the diagonal $\Delta = \{(x,x) \,|\, x \in \R^d\}$. Then since $D_\# (\mu \wedge \nu) \nleq \pi$, the measure $D_\# (\mu \wedge \nu) - \pi_\Delta$ has a non-zero positive part  and we let $\eta$ be the push-forwarded measure of this positive part by the map $(x,x) \mapsto x$. Let $d \pi(x,y) = d \pi_x(y) d\mu(x)$ and $d \pi(x,y) = d \pi^y(x) d\nu(y)$. Then by definition of $\eta$, we see that 
 \begin{align}\label{z1}
\pi_x \neq \delta_x\, \text{ and } \, \pi^x \neq \delta_x, \quad \eta-a.e. \, x.
\end{align}
Let $\Gamma$ be a monotone  set with $\pi(\Gamma)=1$ as in Lemma \ref{mono}.  By choosing a version of the disintegration $(\pi_x)_{x \in A}$ where $\mu(A)=1$, and by replacing $\Gamma$ with $\Gamma \cap (A \times \R^d)$, we can assume that $\delta_x \le_c \pi_x$ and $\pi_x(\Gamma_x)=1$ for every $x \in X_\Gamma$. This implies that, whenever $\pi_x \neq \delta_x$, we can find finitely many points  in $ \Gamma_x \setminus \{x\}$ such that $x$ can be written as a convex combination of them, that is 
 \begin{align}\label{z2}
x = \sum_{i=1}^n p_i y_i \quad \text{where} \quad y_i \in \Gamma_x \setminus \{x\}, \,\, p_i >0, \,\, \sum_{i=1}^n p_i =1.
\end{align}
Now \eqref{z1}, \eqref{z2} clearly imply that, for $\eta$ - a.e. $x$, we can find a probability measure $\rho_x$ such that $\delta_x \le_c \rho_x$, $\supp (\rho_x)$ is a finite subset of $\Gamma_x \setminus \{x\}$, and furthermore there is $z \in \R^d$, $z \neq x$ such that $(z,x) \in \Gamma$.

 Let us explain how this yields  a contradiction against the monotonicity of $\Gamma$. In \eqref{cost} since $f$ is increasing and concave we have $f(|x-y|) +f( |z-x|) \geq f(|y-z|)$, and in fact the inequality is strict unless $z,x,y$  lie on a line in this order. Hence, whenever $z\neq x$, $\rho_x \neq \delta_{x}$ and $\delta_x \le_c \rho_x$, we have
\begin{align*}
\int f(|x-y|)\, d\rho_x (y) + f(|z-x| ) > \int f(|y-z|)\, d\rho_x (y).
\end{align*}
This may be rephrased as follows: the cost of  ``sending the mass $\delta_z$ to $ \rho_x$ and $\delta_x$ to $\delta_x$" is cheaper than ``sending $\delta_z$ to $\delta_x$ and $\delta_x$ to $\rho_x$" (recall $f(0)=0$). As $(z,x) \in \Gamma$ and $\supp (\rho_x)$ is a finite subset of $\Gamma$, this contradicts to the fact that $\Gamma$ is monotone.  
\end{proof}

\begin{remark}\label{stayput1}
It is clear from the proof that Theorem \ref{stability} holds with cost $c$ if
\begin{align*}
\int c(x,y) \,d\rho_x (y) + c(z,x) > \int c(y,z) \,d\rho_x (y)
\end{align*}
holds whenever $z\neq x$, $\rho_x \neq \delta_{x}$ and $\delta_x \le_c \rho_x$. In particular, Theorem \ref{stability} holds when $c(x,y) = ||x-y||$ where $|| \cdot ||$ is a strictly convex norm on $\R^d$.  (A norm is strictly convex if the unit ball is strictly convex, that is every point of the boundary of the unit ball is an extreme point. For example, the Euclidean norm is strictly convex as its unit ball is ``completely round".) 

On the other hand, Theorem \ref{stability} fails for general norm costs; for example, let $d=2$, $||(x,y)|| = |x|+|y|$ and let $\mu=\frac{1}{2} ( \delta_{(0,0)} +  \delta_{(0,1)})$, $\nu=\frac{1}{4}( \delta_{(0,0)} + \delta_{(-1,0)} + \delta_{(1,0)} +  \delta_{(0,2)})$. It is easy to check that then every martingale transport between $\mu, \nu$  yields the same cost. 
\end{remark}

By Theorem \ref{stability}, we can reduce the marginals of the problem \eqref{MGTP} to the  disjoint marginals $\bar{\mu}:=\mu - \mu \wedge \nu$ and $\bar{\nu}:=\nu - \mu \wedge \nu$. Thus, from now on we will always assume that $\mu \wedge \nu = 0$, and therefore, for any minimizer $\pi \in {\rm MT}(\mu, \nu)$, we have a monotone set $\Gamma$ such that $\pi(\Gamma) =1$ and $\Gamma \cap \Delta = \emptyset$, where $\Delta := \{(x,x) \,|\, x \in \R^d\}$.


\section{Structure of optimal martingale transport in one dimension}

In this section, we study the problem \eqref{MGTP} in one dimension, i.e. the marginals $\mu, \nu$ are defined on the real line $\R$. We will consider the cost function \eqref{cost} and will determine the structure of optimal coupling. In this section, we do not assume the symmetry of marginals $\mu, \nu$ with respect to the origin. Recall that we can assume $\mu \wedge \nu =0$. Finally, we will say that $\mu$ is continuous if $\mu$ does not assign positive measure at any point: $\mu(\{x\}) = 0$ for every $x \in \R$.\\

The following theorem for the 1-dimensional case  was shown in \cite{HoKl12,bj} when the cost is the Euclidean distance $c(x,y)=|x-y|$.    By closely following the idea presented in \cite{bj}, here we extend it for the class of costs \eqref{cost} for the application to the main theorem in the next section.
\begin{proposition}\label{1d}
Assume that $\mu \wedge \nu =0$ and $\mu$ is continuous. Let $\pi$ be a minimizer for the problem $\eqref{MGTP}$ with $d=1$ and the cost \eqref{cost}. Then, there exists a monotone set $\Gamma$ such that $\pi(\Gamma)=1$ and for every $x \in X_\Gamma$, we have $|\Gamma_x| = 2$. Hence if we define two functions $S : X_\Gamma \to \R$ and $T : X_\Gamma \to \R$ by $\Gamma_x = \{S(x), T(x)\}$ and $S(x) < x <T(x)$, then $\pi$ is concentrated on $graph(S) \cup graph(T)$. In particular, the minimizer is unique.
\end{proposition}
\begin{proof}
Let $\Gamma$ be any monotone set of $\pi$ with $\pi(\Gamma)=1$ and suppose\\
 $(x, y^-), (x, y^+), (x', y') \in \Gamma$, with $y^-< y' < y^+$. Then we claim that neither
$y^-<x'<x\leq y'$ nor $y'\leq x<x'<y^+$ is possible. To prove the claim, suppose $y^-<x'<x\leq y'$ and let $0 < t <1$ be such that $ty^- + (1-t)y^+ = y'$. Now consider the function
\begin{align*}
G(z) = t c(z,y^-) + (1-t) c(z,y^+) - c(z,  y').
\end{align*}
If $y^- < z <y'$, this becomes (recall $c(x,y) = f(|x-y|)$)
\begin{align*}
G(z) = t f(z - y^- ) + (1-t) f(y^+ - z) - f(y' - z).
\end{align*}
By taking derivative, we get
\begin{align*}
G'(z) &= t f'(z - y^- ) - (1-t) f'(y^+ - z) + f'(y' - z)\\
&= t [f'(z - y^- ) + f'(y^+ - z)] + [f'(y' - z) - f'(y^+ - z)].
\end{align*}
We observe that $f'(y' - z) - f'(y^+ - z) \ge 0$ and $f'(z - y^- ) + f'(y^+ - z) >0$, thus $G'(z) >0$.
Hence for $y^-<x'<x\leq y'$ we have $G(x') < G(x)$, that is
\begin{align*}
t c(x', y^- )+ (1-t) c(x', y^+) + c(x,y') <  t c(x, y^- )+ (1-t) c(x, y^+) + c(x',y').
\end{align*}
This means that if we define a measure $\sigma$ by $\sigma = t \delta_{(x, y^-)} + (1-t) \delta_{(x, y^+)} + \delta_{(x', y')}$, then we have a cost-efficient competitor $\rho$ by defining $\rho = t \delta_{(x', y^-)} + (1-t) \delta_{(x', y^+)} + \delta_{(x, y')}$. Note that $\rho$ satisfies the assumption to be a competitor of $\sigma$. Hence by Lemma \ref{mono}, $(x, y^-), (x, y^+), (x', y') \in \Gamma$ with $y^-< y' < y^+$ and $y^-<x'<x\leq y'$ cannot occur. The case $y'\leq x<x'<y^+$ cannot occur by similar reasoning.

Now we follow the argument in \cite{bj}: Suppose  the set $A:=\{x \in \R : |\Gamma_x| \geq 3 \}$ is uncountable. ($|\Gamma_x| $ is the cardinality of the set $\Gamma_x$.) Then we will have $(x, y^-), (x, y^+), (x, y) \in \Gamma$, with $y^- < x < y < y^+$ or $y^- < y < x < y^+$ (Recall that $\Gamma \cap \Delta = \emptyset$, where $\Delta := \{(x,x) \,|\, x \in \R^d\}$, since $\mu \wedge \nu =0$). Assume the first case. Then the Lemma 3.2 in \cite{bj} shows that for any given $\epsilon >0$, we have $(x', y') \in \Gamma$ with $x-\epsilon < x' <x$ and $|y' - y| < \epsilon$ by the uncountability of $A$. Then for small $\epsilon$ we have the first forbidden case, and similarly if $y^- < y < x < y^+$ then we have $(x', y') \in \Gamma$ with $x< x' <x+\epsilon $ and $|y' - y| < \epsilon$, the second forbidden case, a contradiction. Hence $A$ must be countable, therefore by continuity of $\mu$, $A$ is negligible. 

Uniqueness follows by the usual argument, namely, if $\pi_1$ and $\pi_2$ are optimal solutions realized by $(S_1, T_1)$ and $(S_2, T_2)$ respectively, then the average $\frac{\pi_1 + \pi_2}{2}$ is also optimal and hence it must also be realized by two functions $(S,T)$. This implies that $S_1(x) = S_2(x)$ and $T_1(x) = T_2(x)$ for $\mu$ a.e. $x$, yielding uniqueness.
\end{proof}  
In fact, we can say more on the structure of optimal martingale couplings. We note that the rest of this section is largely motivated by \cite{HoKl12}. While \cite{HoKl12} gives a very detailed study, e.g. the construction of a martingale coupling and duality result for the cost $c(x,y)=|x-y|$, we shall be brief here and focus on the uniqueness property of the solution to \eqref{MGTP} for the class of costs \eqref{cost} for the main theorem in the next section. Note that in the rest of this section, we do not assume the continuity of $\mu$.
\begin{lemma}\label{decrease}
Let $I$ be a bounded  interval with boundary $\partial I = \{a,b\}$ and suppose $\nu(I)=0$. Let $\pi$ be a minimizer for the problem $\eqref{MGTP}$ with respect to the cost \eqref{cost}.  Let $d \pi(x,y) = d \pi_x(y) d\mu(x)$ and if $x \in I$, then denote $\pi^+_x$ as the restriction of $\pi_x$ on $[b, \infty)$ and $\pi^-_x$ as the restriction of $\pi_x$ on $(-\infty, a]$.  Then $x, x' \in I$ and $x < x'$ implies
$$\sup(\supp(\pi^+_{x'})) \leq \inf(\supp(\pi^+_x)) \quad \text{and} \quad \sup(\supp(\pi^-_{x'})) \leq \inf(\supp(\pi^-_x)).$$
 In other words, the set-valued functions $x \mapsto \supp(\pi^+_x)$ and $x \mapsto \supp(\pi^-_x)$ decrease on $I$.
\end{lemma}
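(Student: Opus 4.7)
My plan is to argue by contradiction and reduce everything to the forbidden three-point configurations already isolated in the proof of Theorem~\ref{1d}. Recall that the $G(z)$-argument there, combined with the monotonicity principle (Lemma~\ref{mono}), shows that no monotone concentration set $\Gamma$ can contain three points $(x'',y^-),(x'',y^+),(x''',y''')\in\Gamma$ satisfying $y^-<y'''<y^+$ together with either $y^-<x'''<x''\le y'''$ or $y'''\le x''<x'''<y^+$. I would obtain the lemma by arranging that any failure of the desired monotonicity forces one of these two forbidden configurations. Throughout I will use that, because $\nu(I)=0$, the fiber $\pi_x$ is carried by $(-\infty,a]\cup[b,\infty)$ for every $x\in I$, and since its barycenter lies in $I$ it must charge both halves.

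For the $+$ statement I would fix $x<x'$ in $I$ and assume, for contradiction, that $\sup\supp(\pi^+_{x'})>\inf\supp(\pi^+_x)$. This allows me to select $(x,y),(x',y')\in\Gamma$ with $b\le y<y'$, and, using the above observation, a further point $(x',z)\in\Gamma$ with $z\le a$. Applying the forbidden configuration with $(x'',y^-)=(x',z)$, $(x'',y^+)=(x',y')$, $(x''',y''')=(x,y)$, the chain
\begin{equation*}
z\le a<x<x'<b\le y<y'
\end{equation*}
simultaneously provides $y^-<y'''<y^+$ and the first forbidden inequality $y^-<x'''<x''\le y'''$, contradicting the monotonicity of $\Gamma$. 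The $-$ statement is perfectly symmetric: if $\sup\supp(\pi^-_{x'})>\inf\supp(\pi^-_x)$, I would pick $(x,z),(x',z')\in\Gamma$ with $z<z'\le a$, together with $(x,w)\in\Gamma$, $w\ge b$, obtained from the barycenter constraint at $x$; the triple $(x,z),(x,w),(x',z')$ then realizes the second forbidden case via $z<z'<w$ and $z'\le a<x<x'<b\le w$.

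The one technical obstacle I anticipate is the transition from support-of-disintegration data to honest $\Gamma$-points: the contradiction hypothesis supplies only $y\in\supp(\pi^+_x)$ and $y'\in\supp(\pi^+_{x'})$, whereas the monotonicity principle needs points of $\Gamma$ itself. I expect to handle this in the standard way, by working with a version of $\Gamma$ for which $\pi_{x}(\Gamma_{x})=1$ for $\mu$-a.e.\ $x$ (so that $\supp\pi_x\subseteq\overline{\Gamma_x}$) and then perturbing the chosen points inside arbitrarily small neighbourhoods. All strict inequalities driving the contradiction are protected by the uniform gap $b-a>0$ and by $x<x'$, so no such perturbation can destroy the required forbidden-configuration inequalities.
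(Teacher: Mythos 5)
Your proposal is correct and follows essentially the same route as the paper: choose a monotone set $\Gamma$ with $Y_\Gamma\cap I=\emptyset$, use the barycenter constraint to produce a $\Gamma$-point on the opposite side of $I$, and rule out any violation of monotonicity via the two forbidden three-point configurations from the proof of Theorem~\ref{1d}. Your extra care in passing from $\supp(\pi^\pm_x)$ to actual points of $\Gamma_x$ (the paper instead proves the ordering for the fibers $\Gamma^\pm_x$ first and then transfers it to the supports) is a harmless variation of the same argument.
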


\begin{proof}
Let $\Gamma$ be a monotone set of $\pi$ with $\pi(\Gamma)=1$ and $Y_\Gamma \cap I = \emptyset$. If $x, x' \in I \cap X_\Gamma$ and $x < x'$, then we claim that $\sup(\Gamma^+_{x'}) \leq \inf(\Gamma^+_x)$, where $\Gamma^+_x := \Gamma_x \cap [b, \infty)$. If not, then we can find $y' > y \geq b$ such that $(x,y), (x', y') \in \Gamma$. As $\pi$ is a martingale, we can also find $y'' \leq a$ with $(x', y'') \in \Gamma$. Then the configuration $(x,y), (x', y'), (x', y'') \in \Gamma$ is forbidden by the proof of Theorem \ref{1d}, a contradiction. As $\pi_x(\Gamma_x) = 1$ for every $x \in X_\Gamma$, $\pi^+_x$ has its full mass on $\Gamma^+_x$, hence $\sup(\supp(\pi^+_{x'})) \leq \inf(\supp(\pi^+_x))$. The other case $\sup(\supp(\pi^-_{x'})) \leq \inf(\supp(\pi^-_x))$ can be proved similarly.
\end{proof}

We may call the above result as ``local decreasing property", as the function $x \mapsto \supp(\pi^+_x)$ and $x \mapsto \supp(\pi^-_x)$ decrease locally, i.e. on any  interval $I$ where $\nu(I)=0$. Thus if we make the following assumption, we will have the global decreasing property for any optimal martingale transport. This assumption was also made in \cite{HoKl12}.\\

\noindent {\bf Dispersion Assumption.} \,\,There is an  interval $I$ such that
 $$\mu(I) = 1 \quad \text{and} \quad \nu(I) = 0.$$

For example, two Gaussian measures $\mu, \nu$ in convex order will satisfy this assumption, after $\mu \wedge \nu$ is subtracted from each marginal. Now we observe that the global decreasing property also yields the uniqueness of optimal solution, without assuming the continuity of $\mu$.

\begin{proposition}\label{uniqueness}
 Assume the dispersion assumption. Then there exists a unique element in ${\rm MT}(\mu,\nu)$ which is  decreasing in the sense of Lemma \ref{decrease}. Similarly, there exists a unique increasing element  in ${\rm MT}(\mu,\nu)$. In particular, under the dispersion assumption there is a unique solution to the problem \eqref{MGTP} with the cost \eqref{cost}.
\end{proposition} 
\begin{proof} We shall only need to prove that the decreasing property uniquely determines the martingale $\pi \in {\rm MT}(\mu,\nu)$, as the increasing case will be similar.

Let $I$ be a bounded  interval with boundary $\partial I = \{a,b\}$ and let $p^2: \R^2 \to \R$ be the projection to the second coordinate. For $x\in \R$ and $\pi \in {\rm MT}(\mu,\nu)$, define 
$$\nu^-_{\pi,x} =p^2_\# (\pi\big{|}_{(-\infty, x] \times (-\infty, a]}), \quad\nu^+_{\pi,x} =p^2_\# (\pi\big{|}_{(-\infty, x] \times [b, \infty)}).$$
Let $ \nu_{\pi,x}= \nu^-_{\pi,x}+\nu^+_{\pi,x}$ and note that for  $\pi, \tilde \pi \in {\rm MT}(\mu,\nu)$,  $\pi = \tilde \pi \Leftrightarrow \nu_{\pi,x} = \nu_{\tilde \pi,x}$ for every $x \in \R$. Now assume that $\pi, \tilde \pi$ are optimal solutions for the problem \eqref{MGTP} and fix $x \in \R$. We claim that $\nu_{\pi,x} = \nu_{\tilde \pi,x}$.

To see this, observe that Lemma \ref{decrease} clearly implies that $\nu^-_{\pi,x}, \nu^+_{\pi,x}$ must be    concentrated to the right; that is, $\nu^-_{\pi,x}, \nu^+_{\pi,x}$ must be of the form
\begin{align}\label{right}
\nu^-_{\pi,x}=\nu\big{|}_{(s,a]}+c^-\nu(\{s\})\delta_s, \quad \nu^+_{\pi,x}=\nu\big{|}_{(t,\infty)}+c^+\nu(\{t\})\delta_t
\end{align}
for some $s,t \in \R$ and $0 < c^+, c^- \le 1$. Then since $\mu((-\infty, x]) = ||\nu^-_{\pi,x}||+||\nu^+_{\pi,x} ||= ||\nu^-_{\tilde \pi,x}||+||\nu^+_{\tilde \pi,x}||$, the right-concentration property \eqref{right} implies that either 
$$\nu^-_{\pi,x} \ge \nu^-_{\tilde \pi,x}\, \text{ and }  \, \nu^+_{\pi,x} \le \nu^+_{\tilde\pi,x}, \quad \text{or} \quad \nu^-_{\pi,x} \le \nu^-_{\tilde \pi,x}\, \text{ and }  \, \nu^+_{\pi,x} \ge \nu^+_{\tilde\pi,x}.$$
Let us assume the first case. Now the fact that $\pi$ being a martingale measure implies in particular
\begin{align*} 
 \int y \,d\mu\big{|}_{(-\infty,x]}(y) = \int y \,d(\nu_{\pi,x}) 
\end{align*}
and of course $ \int y \,d\mu\big{|}_{(-\infty,x]}(y) = \int y \,d(\nu_{\tilde\pi,x})$ also. Hence
\begin{align*}
& \int y \,d(\nu_{\pi,x} - \nu_{\tilde \pi,x})(y) = \int y \,d\mu\big{|}_{(-\infty,x]}(y) -  \int y\, d\mu\big{|}_{(-\infty,x]}(y) = 0\\
& \Rightarrow \int y \, d(\nu^-_{\pi,x} -\nu^-_{\tilde \pi,x})(y) = \int y \, d(\nu^+_{\tilde \pi,x} -\nu^+_{ \pi,x})(y).
\end{align*}
But as we assumed the first case, both $\nu^-_{\pi,x} -\nu^-_{\tilde \pi,x}$, $\nu^+_{\tilde \pi,x} -\nu^+_{ \pi,x}$ are nonnegative measures which are concentrated on the disjoint intervals $(-\infty,a]$, $[b, \infty)$ respectively. Therefore, the last identity implies that both measures must be zero, i.e. $\nu^-_{\pi,x} = \nu^-_{\tilde \pi,x}$ and  $\nu^+_{\tilde \pi,x} = \nu^+_{ \pi,x}$. The claim is proved, hence the proposition.
\end{proof}

In particular if we assume the symmetry of $\mu, \nu$ with respect to the origin:
\begin{corollary}\label{coro}
 If $\mu, \nu$ are symmetric with respect to the origin, then under the assumptions of Theorem \ref{1d} or \ref{uniqueness}, the optimal martingale coupling  is unique and  symmetric with respect to the origin.
\end{corollary}

\begin{proof}
We can prove it directly, or we let $\eta$ be the optimal coupling in Theorem \ref{1d} or \ref{uniqueness}, and let 
$\zeta = \frac{1}{2}(\eta + \eta')$ be a symmetrization of $\eta$, where $\eta'$ is the reflection of $\eta$ with respect to the origin. Then $\zeta$ is also optimal, so by uniqueness, $\eta = \zeta$.
\end{proof}

\section{Structure of optimal martingale transport in higher dimensions}
We have studied the structure of the martingale transport in one dimension which minimizes $\E \,f(|X-Y|)$, and in particular have shown its uniqueness either when $\mu$ is continuous or when the separation assumption holds. In this section, we will introduce the notion of symmetrization of a transport plan, and then will present a variational calculus which will reduce the higher dimensional problem under radially symmetric marginals to the one-dimensional situation.

\subsection{Symmetrization of transport plans and the $R$-equivalence relation} 

In this section, we introduce the notion of $R$-equivalence on the space of probability measures on $\R^d$ and the notion of symmetrization of transport plans (i.e. probability measures on $\R^d \times \R^d$). These ideas will play a crucial role  for the proof of Theorem \ref{mainresult}. First, we introduce the notion of $R$-equivalence on the space of probability measures on $\R^d$. Let $m(x)=|x|$ be the modulus map on $\R^d$.

\begin{definition} Probability measures $\sigma$ and $\rho$ on $\R^d$ are called $R$-equivalent if $m_\#(\sigma) = m_\#(\rho)$, i.e. $\sigma$ and $\rho$ contain the same mass on any annulus. That is, for any $B \subset \R_+$ and any $A_B:= \{ x \in \R^d \,|\, |x| \in B \}$ we have $\sigma (A_B) = \rho (A_B)$. In this case we write $\sigma \cong_R \rho$. We define $\mathcal{R}\sigma$ as the unique radially symmetric probability measure satisfying $\mathcal{R}\sigma \cong_R \sigma$. 
\end{definition}
 As there is one-to-one correspondence between radially symmetric measures on $\R^d$ and measures on $\R_+$ by the push-forward map $m_\#$ (i.e. any radially symmetric measure $\sigma$ is characterized by $m_\#(\sigma)$), the definition of $\mathcal{R}\sigma$ is justified.


Let $O(d)$ be the orthogonal group in dimension $d$,  on which the Haar measure  $\mathcal{H}$ is defined. Given $M \in O(d)$ and a transport plan $\pi$,  we define the push-forward $M_\#\pi$ as the following: for Borel sets $C \subset \R^{2d}$, we define
$$M_\# \pi(C) = \pi(M^{-1}(C))$$
where $(x,y) \in M^{-1}(C) \Leftrightarrow (Mx,My) \in C$ for all $x,y \in \R^d$. In particular, $M_\# \pi(A \times B) = \pi(M^{-1}(A) \times M^{-1}(B))$ for all $A,B \subset \R^d$.

Note that if $\pi \in \Pi(\mu, \nu)$, then $M_{\#}\pi \in \Pi(M_{\#}\mu, M_{\#}\nu)$. Now we introduce the symmetrization operator which acts  on the space of transport plans.
\begin{definition}\label{symmetrization}
We define the symmetrization operator $\mathcal{S}$ on a transport plan $\pi$ as: for each Borel subset $D \subset \R^{2d}$,
$$\mathcal{S}\pi(D) =\int_{M \in O(d)}  M_{\#}\pi(D) \,d\mathcal{H}(M).$$
\end{definition}
 \begin{definition}
Let $0 \neq x\in \R^d$ and let $L_x$ be the one-dimensional subspace spanned by $x$. Let $O_x(d):= \{ M \in O(d)\,|\, Mx=x\}$. We say that a probability measure $\sigma$ on $\R^d$ is $L_x$-symmetric if for every $A \subset \R^d$ and $M \in O_x(d)$ we have 
$$\sigma(A) = \sigma(M(A)).$$
\end{definition}

The following lemma explains why we want to consider the operator $\mathcal{S}$. 

\begin{lemma}\label{symmetryproperty} $\mathcal{S}$ has the following properties:
\begin{enumerate}
\item If $\pi \in \Pi(\mu, \nu)$, then $\mathcal{S}\pi \in \Pi(\mathcal{R}\mu, \mathcal{R}\nu)$. In particular if $\pi$ has radially symmetric marginals, then $\mathcal{S}\pi$ has the same marginals as $\pi$.

\item Let $\pi \in \Pi(\mu, \nu)$ and let $d \mathcal{S}\pi (x,y) = d \mathcal{S}\pi _x(y) d \mathcal{R}\mu(x)$. Then $\mathcal{S}\pi _x$ is $L_x$-symmeric. Furthermore $(\mathcal{S}\pi _x)_x$ is rotationally congruent;  for every $x,y \in \R^d$ and $M \in O(d)$ such that $Mx=y$, we have $M_\#\mathcal{S}\pi _x = \mathcal{S}\pi _y$.

\item If $\pi \in {\rm MT}(\mu, \nu)$, then $\mathcal{S}\pi \in {\rm MT}(\mathcal{R}\mu, \mathcal{R}\nu)$.

\item If the cost function is rotation invariant, that is if $c(x,y) = c(Mx, My)$ for any $M \in O(d)$, then the cost of a plan $\pi$ is equal to the cost of $\mathcal{S}\pi$.
\end{enumerate}
\end{lemma}
\begin{proof}
\begin{enumerate}
\item Let $A\subset\R^d$ and $N \in O(d)$. Then 
\begin{align*}
\mathcal{S}\pi (N(A) \times \R^d) &=\int_{M \in O(d)}  M_{\#}\pi(N(A) \times \R^d) \,d\mathcal{H}(M)\\
&=\int_{M \in O(d)}  \pi(M^{-1}N(A) \times \R^d) \,d\mathcal{H}(M)\\
&=\int_{M \in O(d)}  \pi(M^{-1}(A) \times \R^d) \,d\mathcal{H}(M)\\
&=\int_{M \in O(d)}  M_{\#}\pi(A \times \R^d) \,d\mathcal{H}(M)\\
&=\mathcal{S}\pi (A \times \R^d)
\end{align*}
showing that the marginals of $\mathcal{S}\pi$ are radially symmetric. Now let $B \subset \R^d$ be an annulus set, that is $B = M(B)$ for any $M \in O(d)$. Then
\begin{align*}
\mu(B)&=\pi(B \times \R^d) = \pi(M^{-1}(B) \times \R^d)\\
&=\int_{M \in O(d)}  \pi(M^{-1}(B) \times \R^d) \,d\mathcal{H}(M)\\
&=\int_{M \in O(d)}  M_{\#}\pi(B \times \R^d) \,d\mathcal{H}(M)\\
&=\mathcal{S}\pi (B \times \R^d)
\end{align*}
showing that $\mathcal{S}\pi \in \Pi(\mathcal{R}\mu, \mathcal{R}\nu)$.

\item Let $A \subset \R^d$, $B_{x,r}$ be the open ball centered at $x$ and radius $r$, and $N \in O_x(d)$. Then 
\begin{align*}
\mathcal{S}\pi (B_{x,r} \times N^{-1}(A)) &=\int_{M \in O(d)}  M_{\#}\pi(B_{x,r} \times N^{-1}(A)) \,d\mathcal{H}(M)\\
&=\int_{M \in O(d)}  \pi(M^{-1}N^{-1}(B_{x,r}) \times M^{-1}N^{-1}(A)) \,d\mathcal{H}(M)\\
&=\int_{M \in O(d)} M_\# \pi(B_{x,r} \times  A) \,d\mathcal{H}(M)\\
&=\mathcal{S}\pi (B_{x,r} \times  A).
\end{align*}
Dividing by $\mu(B_{x,r})$ and letting $r \to 0$, as $\mathcal{S}\pi_x(A)$ being the Radon-Nikodym derivative $\frac{d\mathcal{S}\pi (x,A)}{d \mu(x)}$, we get that  $\mathcal{S}\pi _x(A) = \mathcal{S}\pi _x(N^{-1}(A))$, i.e. $\mathcal{S}\pi _x$ is $L_x$-symmetric. Now let $x,y \in \R^d$ and $M \in O(d)$ such that $Mx=y$. Then the rotational congruence of disintegration is clear once we observe that $\mathcal{S}\pi (B_{x,r} \times A) = \mathcal{S}\pi (B_{y,r} \times M(A))$ for any $A \subset \R^d$.

\item Let $h:\R^d \to \R^d$ be a bounded measurable function. Then 
\begin{align*}
&\int_{\R^{2d}} h(x)\cdot(y-x) \,d\mathcal{S}\pi (x, y)\\
=&\int_{O(d)}\int_{\R^{2d}} h(x)\cdot(y-x) \,dM_\#\pi (x, y)\,d\mathcal{H}(M) = 0
\end{align*}
since $M_\#\pi$ is clearly a martingale measure. This proves that $\mathcal{S}\pi$ is a martingale measure.

\item Since the cost of $\pi$ is the same as the cost of $M_\# \pi$ for any $M \in O(d)$, it is clear from the definition of $\mathcal{S}\pi$.
\end{enumerate} 
\end{proof}
The meaning of the operator $\mathcal S\pi$ is that disintegration of $\mathcal S\pi$ is made by superposition of that of $\pi$ moved by all orthogonal matrices. Let us give an example.
\begin{example}
Let $d=2$, $\mu = \frac{3}{5} \delta_{(0,1)} + \frac{2}{5} \delta_{(1,0)}$, and $\pi$ be a transport plan with the first marginal $\mu$ and the disintegration $\pi_{(0,1)} = \delta_{(0,2)}$, $\pi_{(1,0)} =  \delta_{(1,1)}$ with respect to $\mu$. In words, $\pi$ transports 3/5 mass from $(0,1)$ to $(0,2)$ and 2/5 mass from $(1,0)$ to $(1,1)$. What is the first marginal and disintegration of $\mathcal{S}\pi$? As $\mu$ has its total mass on the unit circle, the first marginal of $\mathcal{S}\pi$ is the uniform probability measure on the unit circle, that is $\mathcal{R}\mu$. Next, let us  superpose (by rotation) the two transports occured by $\pi$ to have the same (but any) starting point on the unit circle, say we choose the starting point as $(0,1)$, so the resulting superposed plan transports 3/5 mass from $(0,1)$ to $(0,2)$ as before and 2/5 mass from $(0,1)$ to $(-1,1)$. Let us write this as $\tilde \pi_{(0,1)} = \frac{3}{5} \delta_{(0,2)} + \frac{2}{5} \delta_{(-1,1)}$. Next, we symmetrize $\tilde \pi_{(0,1)}$ with respect to its ``axis" $(0,1)$, so that the resulting transport becomes $(\mathcal{S}\pi)_{(0,1)} = \frac{3}{5} \delta_{(0,2)} + \frac{1}{5} \delta_{(-1,1)} + \frac{1}{5} \delta_{(1,1)}$. This is a disintegration of $\mathcal{S}\pi$ at the point $(0,1)$, and disintegration at other points of the unit circle is simply the rotation of $(\mathcal{S}\pi)_{(0,1)}$. For example, $(\mathcal{S}\pi)_{(1,0)} = \frac{3}{5} \delta_{(2,0)} + \frac{1}{5} \delta_{(1,1)} + \frac{1}{5} \delta_{(1,-1)}$.
\end{example}

\subsection{Deformation lemma and main theorem} In this section, we will present a deformation lemma which will allow martingale transport problem under radial marginals to be reduced to the problem on the one-dimensional subspaces, where we can apply the results in the previous section. 

For a function $f$ and a measure $\mu$, we denote $f\mu$ to be the measure defined by $f\mu(A) = \int_A f(x) d\mu(x)$. We use this notation with  functions $w^\pm$ and a measure $\sigma_*$ in \eqref{deform} of the next lemma.
 \begin{lemma}\label{variational}
Consider the cost function of the form $c(x,y) = h(|x-y|)$ and let $L_x$ be the one-dimensional subspace spanned by $x$, $x\neq 0$. Let $\sigma$ be a $L_x$-symmetric probability measure on $\R^d$ with barycenter at $x$. Define continuous functions $f_x^+, f_x^- : \R^d \to \R^d$ by $ f_x^- = -f_x^+$, and
\begin{align}\label{f+f-}
f_x^+(y) = \frac{|y|}{|x|}x \quad \text{if} \quad y \notin L_x, \quad f_x^+(y) = y \quad \text{otherwise.} \end{align}
Suppose that $r \mapsto {h'(r)}/{r}$ is strictly decreasing for $r>0$. If $\sigma(L_x) <1$, then there exists a probability measure  $\rho$ with barycenter at $x$, $\rho(L_x)=1$ and $\rho \cong_R  \sigma$, such that
\begin{align}\label{costless}
\int_{\R^d} h(|x-y|)\, d\sigma(y) > \int_{\R^d} h(|x-y|)\, d\rho(y).
\end{align}
 A concrete choice of such a $\rho$ is as follows: write $\sigma = \sigma_* + \sigma_0$ where $\sigma_0 = \sigma(0)\delta_0$. Let $w^+ (y)= \frac{1}{2}+\frac{1}{2} \frac{x}{|x|}\cdot \frac{y}{|y|}$, $w^- (y)= \frac{1}{2}-\frac{1}{2} \frac{x}{|x|}\cdot \frac{y}{|y|}$ for $y \neq 0$. Then
\begin{align}\label{deform}
\rho = L_x(\sigma) := \sigma_0 + {f_x^+}_\#(w^+\sigma_*) + {f_x^-}_\#(w^-\sigma_*).
\end{align}
\end{lemma}

\begin{remark}
As $(\frac{h'(r)}{r})' = \frac{rh''(r) - h'(r)}{r^2}$, the cost \eqref{cost} satisfies the assumption of the lemma. Another example is  $h(r)=r^p$, $0<p<2$, or $h(r)= -r^p$, $p>2$.
\end{remark}

\begin{proof} We will first prove the lemma in the simple case where $d=2$ and $\sigma$ is supported at two points. Now we will explain how to deform $\sigma$ to obtain $\rho$.

For this purpose, we will consider the family of probability measures $\rho(t)$ supported on the four points $z^1_n (t), z^2_n (t), z^1_s (t), z^2_s (t)$ in $\R^2$ (which we will define below), where $0 \leq t \leq 1$ is a parameter. We will identify the measure $\rho(t)$ with the transport plan from $\delta_x$ to $\rho(t)$, and we will observe that the cost of the transport $\rho(t)$ strictly decreases as $t$ increases. This will be the desired deformation process and $\rho(1)=\rho$  in the lemma while $\rho(0)= \sigma$.

To begin, without loss of generality let the barycenter $x$ be a point in $\R^2$, $x=(0,b), b\neq 0$. Let $z^1, z^2 \in \R^2$, $|z^1|=|z^2|=r>0$ and let 
$z^1 = (a, z), \,z^2 = (-a, z)$. Now for $0\leq t \leq 1$, let $z_n(t) = z + t (r-z), \,z_s(t) = z - t (r+z)$, and let
\begin{align*}
&z^1_n (t) = \big{(}\sqrt{r^2 - (z_n (t))^2}, z_n (t) \big{)}, \quad z^2_n (t) = \big{(}-\sqrt{r^2 - (z_n (t))^2}, z_n (t) \big{)}, \\ &z^1_s (t) = \big{(}\sqrt{r^2 - (z_s (t))^2}, z_s (t) \big{)}, \quad z^2_s (t) = \big{(}-\sqrt{r^2 - (z_s (t))^2}, z_s (t) \big{)}.
\end{align*}

Thus, the four points $z^1_n (t), z^2_n (t), z^1_s (t), z^2_s (t)$ are on the circle of center $0$ and radius $r$, and they are symmetrically located with respect to $L_x$. Now define the probability measure $\rho(t)$ and the transport cost from $\delta_x$ to $\rho(t)$
\begin{align*}
\rho(t) &= \frac{r+z}{4r} \delta_{z^1_n (t)} + \frac{r+z}{4r} \delta_{z^2_n (t)} + \frac{r-z}{4r} \delta_{z^1_s (t)} + 
\frac{r-z}{4r} \delta_{z^2_s (t)},\\
C(t) &= \frac{r+z}{2r} h (|z^1_n (t) - x|) + \frac{r-z}{2r} h (|z^1_s (t) - x|).
\end{align*}


Note that $\rho(0) = \frac{1}{2} \delta_{(-a, z)} + \frac{1}{2} \delta_{(a, z)} = \sigma$ and $\rho(1) = \frac{r+z}{2r} \delta_{(0, r)} + \frac{r-z}{2r} \delta_{(0, -r)} = \rho$, so $\rho (t)$ is a continuous deformation from 
$\sigma$ to $\rho$ along the circle of radius $r$. The point is that for all $0 \leq t \leq 1$, the barycenter of $\rho (t)$ is fixed at $(0, z)$ and they are obviously $R$-equivalent. Now we will show $C'(t) <0$ if ${h'(r)}/{r}$ is strictly decreasing for $r>0$. To see this, we compute
\begin{align*}
C'(t) &= \bigg{(}\frac{r+z}{2r}\bigg{)} \frac{h'(|z^1_n (t) - x |)}{|z^1_n (t) - x |} \big{<}z^1_n (t) - x , \frac{d}{dt} \big{(}z^1_n (t) \big{)}\big{>} \\
&+ \bigg{(}\frac{r-z}{2r}\bigg{)} \frac{h'(|z^1_s (t) - x |)}{|z^1_s (t) - x |} \big{<}z^1_s (t) - x , \frac{d}{dt} \big{(}z^1_s (t) \big{)}\big{>}
\end{align*}
where $\big{<},\big{>}$ is the inner product. Note  $\big{<}z^1_n (t), \frac{d}{dt} \big{(}z^1_n (t) \big{)}\big{>} = 
\big{<}z^1_s (t), \frac{d}{dt} \big{(}z^1_s (t) \big{)}\big{>} = 0$, and
\begin{equation*}
\big{<}x, \frac{d}{dt} \big{(}z^1_n (t) \big{)}\big{>} = b (r - z),\quad  \big{<}x, \frac{d}{dt} \big{(}z^1_s (t) \big{)}\big{>} = -b(r + z), \quad \text {hence}
\end{equation*}
\begin{equation*}
C'(t) = \bigg{(}\frac{b(r+z)(r-z)}{2r}\bigg{)} \bigg{[} \frac{h'(|z^1_s (t) - x |)}{|z^1_s (t) - x |} - 
\frac{h'(|z^1_n (t) - x |)}{|z^1_n (t) - x |} \bigg{]}.
\end{equation*}
Now we compute
\begin{align*}
&|z^1_n (t) - x |^2 = r^2 + b^2 -2b\,z_n(t) = r^2 + b^2 -2b (z+t(r-z))\\
&|z^1_s (t) - x |^2 = r^2 + b^2 -2b\,z_s(t) = r^2 + b^2 -2b (z-t(r+z))\\
&|z^1_n (t) - x |^2 - |z^1_s (t) - x |^2 = -4brt.
\end{align*}
Hence, we see that 
\begin{align*}
&|z^1_n (t) - x | < |z^1_s (t) - x | \quad \text{if} \quad b>0\\
&|z^1_n (t) - x | > |z^1_s (t) - x | \quad \text{if} \quad b<0.
\end{align*}
Thus in any case $C'(t) < 0$. Hence, $C(0) > C(1)$ and observe that $\rho$ is supported on the line $L_x$, and in fact $\rho(1) = L_x(\rho(0))$ as defined in \eqref{deform}.

Now we turn to the general case, but by the $L_x$-symmetry of $\sigma$ we will observe that the above case is already sufficiently general. The point is that for any $z \in \R^d$ there exists a unique reflected point $\tilde z$ with respect to $L_x$, and $\sigma$ assigns the same mass around $z$ and $\tilde z$, i.e. for $A\subset \R^d$ and its reflection $\tilde A$ ($z \in A \Leftrightarrow \tilde z \in \tilde A$) we have $\sigma(A)= \sigma (\tilde A)$. Hence the $L_x$-symmetric measure $\sigma$ can be seen as the sum of infinitesimal measures supported on such symmetric pairs $(z, \tilde z)$, to each of which the above variational calculus applies. 

A detailed calculation is as follows:
\begin{align*}      
&\int h(|x-z|)\, d\sigma(z) \\
=&\int \frac{1}{2}\bigg{(}h(|x-z|)+   h(|x-\tilde z|) \bigg{)}\, d\sigma(z)  \\
=&\iint h(|x-y|)\, d\rho_z(y)\,d\sigma(z) \quad\text{(where $\rho_z := \frac{1}{2}(\delta_z + \delta_{\tilde z})$)}.
\end{align*}
Now whenever $z \notin L_x$, the computation we presented above (applied on the 2-dimensional subspace generated by $x$ and $z$) shows that
\begin{align*}      
\int h(|x-y|)\, d\rho_z(y) > \int h(|x-y|)\, dL_x(\rho_z)(y).
\end{align*}
Thus if $\sigma(L_x) <1$, 
\begin{align*}      
&\int h(|x-y|)\, d\sigma(y) 
>  \iint h(|x-y|)\, dL_x(\rho_z)(y)\,d\sigma(z)\\
= &\iint h(|x-y|)\, dL_x(\delta_z)(y)\,d\sigma(z) \quad(\text{since } L_x(\rho_z) = L_x(\delta_z))\\
=  & \int  h(|x-y|)\, dL_x(\sigma)(y).
\end{align*}
The last thing to check is that the barycenter of $L_x(\sigma)$ is the same as that of $\sigma$, and the $L_x$-symmetry of $\sigma$ is used for this. This was already shown for the simple case and can be shown for the general case by a  similar calculation as above. The detail is as follows:
\begin{align*}      
{\rm bary}(\sigma) &:= \int y\,d\sigma(y) = \frac{1}{2}\bigg{(}  \int z \,d\sigma(z)  + \int \tilde z \,d\sigma(\tilde z)   \bigg{)}\\
&=\int \frac{1}{2}(z + \tilde z) \,d\sigma(z) \quad (\text{since $\sigma$ is $L_x$-symmetric})\\
&= \int {\rm bary}(\rho_z) \,d\sigma(z)\\
&= \int {\rm bary}(L_x(\rho_z)) \,d\sigma(z) \quad (\text{equality was shown for this simple case})\\
&=\iint y \,L_x(\rho_z)(y) \,d\sigma(z) = \int y \, dL_x(\sigma)(y) = {\rm bary} (L_x(\sigma)).
\end{align*}

\end{proof}
Finally, we apply the symmetrization arguments in conjunction with the deformation lemma to prove Theorem \ref{mainresult}. Recall that without loss of generality we can assume $\mu \wedge \nu = 0$.

\begin{proof} Let $\pi$ be a minimizer. Then cost$(\pi)$=cost$(\mathcal{S}\pi)$, so we will assume that $\pi =\mathcal{S}\pi$. Let $d \pi(x,y) = d \pi_x(y) d\mu(x)$. Now we claim that for $\mu$ a.e. $x$, $\pi_x$ is concentrated on the line $L_x$. 

To see this, let $F^+(x,y) = (x, f^+_x(y))$, $F^-(x,y) = (x, f^-_x(y))$, and $w^+ (x,y)= \frac{1}{2}+\frac{1}{2} \frac{x}{|x|}\cdot \frac{y}{|y|}$, $w^- (x,y)= \frac{1}{2}-\frac{1}{2} \frac{x}{|x|}\cdot \frac{y}{|y|}$ for $x, y \neq 0$. Write $\pi = \pi_* + \pi_0$ where $\pi_0 := \pi |_{\R^d \times \{0\}}$. Note that then $ \pi_*(\{0\} \times \R^d) = \pi_*(\R^d \times\{0\} ) = 0$. 

Now define the measure $\rho$ just as in Lemma \ref{variational}:
\begin{align*}
\rho = \pi_0 + {F^+}_\#(w^+\pi_*) + {F^-}_\#(w^-\pi_*).
\end{align*}
 Then the argument in Lemma \ref{variational} along with Lemma \ref{symmetryproperty}(2) we immediately see that cost($\pi$) $>$ cost($\rho$) as soon as $\pi_x$ is not concentrated on $L_x$. Furthermore, by the unchanging nature of barycenter in Lemma \ref{variational} we see that $\rho$ is a martingale transport. However,  note that $\rho$ is not necessarily  in {\rm MT}$(\mu, \nu)$ as it may not have $\nu$ as its second marginal. 

Let $d \rho(x,y) = d\rho_x(y) d\mu(x)$ and let $\tilde \nu$ be the second marginal of $\rho$. Let $A \subset \R^d$ be any annulus set. Then since $\pi_x$ and $\rho_x$ are $R$-equivalent for $\mu$-a.e. $x$,
$$\nu(A) = \int \pi_x(A) d\mu(x) =   \int \rho_x(A) d\mu(x) =\tilde \nu(A) $$
implying that $\mathcal{R} \tilde \nu = \nu$. Then Lemma \ref{symmetryproperty}(1) implies that $\mathcal{S}\rho \in$ {\rm MT}$(\mu, \nu)$, a contradiction to the fact that cost($\pi$) $>$ cost($\rho$) = cost($\mathcal{S}\rho$).  Hence the claim is true for every minimizer $\pi \in $ {\rm MT}$(\mu, \nu)$ (even without assuming $\pi =\mathcal{S}\pi$, since if $\pi$ does not satisfy the claim then neither does $\mathcal{S}\pi$). 

Now the problem \eqref{MGTP} is decomposed to the problem on each one-dimensional subspace with induced marginals as follows: let $\mathcal{L}$ be the set of all one-dimensional subspaces of $\R^d$, and for each $L \in \mathcal{L}$ let $L_0:=L \setminus \{0\}$. Disintegrate $\pi$ along the family of disjoint sets $\{L_0 \times \R^d \,|\, L \in \mathcal{L}\}$ (recall $\mu(\{0\})=0$) and denote it as $(\pi_L)_{L \in \mathcal{L}}$. The above proof shows that in fact $\pi_L$ is concentrated on $L_0 \times L$ hence it is a one-dimensional martingale transport. Then Corollary \ref{coro} provides the uniqueness of each $\pi_L$, which in turn yields the  uniqueness of $\pi$.
\end{proof}


\begin{thebibliography}{35}


\bibitem{akp}
L. ~Ambrosio, B. ~Kirchheim, and A. ~Pratelli.
\newblock Existence of optimal transport maps for crystalline norms.
\newblock {\em Duke Math. J.}, Volume 125, Number 2 (2004), 207-241.

\bibitem{ap}
L. ~Ambrosio and A. ~Pratelli.
\newblock Existence and stability results in the $L^1$ theory of optimal transportation.
\newblock {\em Optimal transportation and applications. Springer Berlin Heidelberg.},
Volume 1813 of the series Lecture Notes in Mathematics (2003) 123-160.

\bibitem{BeGr}
M.~Beiglb{\"o}ck and C. Griessler.
\newblock An optimality principle with applications in optimal transport.
\newblock http://arxiv.org/abs/1404.7054, (2014).


\bibitem{BeHePe11}
M.~Beiglb{\"o}ck, P.~Henry-Labordere, and F.~Penkner.
\newblock Model-independent bounds for option prices:
a mass transport approach.
\newblock {\em Finance and Stochastics.}, 17 (2013) 477-501.

\bibitem{BeHeTo}
M.~Beiglb{\"o}ck, P.~Henry-Labordere, and N.~Touzi.
\newblock Monotone martingale transport plans and Skorohod Embedding.
\newblock {\em Stochastic Processes and their Applications.},
Volume 127, Issue 9 (2017), 3005-3013.

\bibitem{bj}
M.~Beiglb{\"o}ck and N.~Juillet.
\newblock On a problem of optimal transport under marginal martingale constraints.
\newblock {\em Ann. Probab.}, Volume 44, Number 1 (2016), 42-106.

\bibitem{bch}
M.~Beiglb{\"o}ck, A.M.G.~Cox, and M.~Huesmann.
\newblock Optimal transport and Skorokhod embedding.
\newblock {\em Inventiones mathematicae.},
 Volume 208, Issue 2 (2017), 327-400.

\bibitem{bnt}
M.~Beiglb{\"o}ck, M.~Nutz, and N.~Touzi.
\newblock Complete duality for martingale optimal transport on the line.
\newblock  {\em Ann. Probab.}, to appear. https://arxiv.org/abs/1507.00671, (2015).

\bibitem{bc}
S.~Bianchini and F.~Cavalletti.
\newblock The Monge problem for distance cost in geodesic spaces.
\newblock {\em Commun. Math. Phys.} (2013) 318: 615. doi:10.1007/s00220-013-1663-8


\bibitem{cfm}
L.A. Caffarelli, M. Feldman, and R. J. McCann.
\newblock Constructing optimal maps for Monge's transport problem as a limit of strictly convex costs. 
\newblock {\em J. Amer. Math. Soc.}, 15 (2002), 1-26



\bibitem{ds1}
Y.~Dolinsky and H.M.~Soner.
\newblock Martingale optimal transport and robust hedging in continuous time.
\newblock {\em Probab. Theory Relat. Fields.}, 160 (2014) 391-427.

\bibitem{evans}
L.C. Evans.
\newblock Partial differential equations and Monge-Kantorovich mass transfer.
\newblock {\em 	Current Developments in Mathematics.}, 1997, International Press.



\bibitem{GaHeTo11}
A.~Galichon, P.~Henry-Labordere, and N.~Touzi.
\newblock A Stochastic Control Approach to No-Arbitrage Bounds Given
  Marginals, with an Application to Lookback Options.
\newblock {\em Annals of Applied Probability.}, Volume 24, Number 1 (2014) 312-336.

\bibitem{gm}
W.~Gangbo and R.J.~McCann.
\newblock The geometry of optimal transportation.
\newblock {\em Acta Math.}, Volume 177, Issue 2 (1996) 113-161. 

\bibitem{GKL2}
N.~Ghoussoub, Y-H. ~Kim, and T. ~Lim.
\newblock 
Structure of optimal martingale transport in general dimensions.
\newblock {\em Ann. Probab.}, to appear. http://arxiv.org/abs/1508.01806, (2015).

\bibitem{GTT}
G.~Guo, X.~Tan, and N.~Touzi.
\newblock 
Optimal Skorokhod embedding under finitely-many marginal constraints.
\newblock {\em SIAM J. Control Optim.}, 54(4), (2016) 2174-2201.

\bibitem{HL}
P.~Henry-Labordere.
\newblock 
Model-free Hedging: A Martingale Optimal Transport Viewpoint.
\newblock {\em Chapman and Hall/CRC Financial Mathematics Series.}, (2017). 



\bibitem{Ho98}
D.~Hobson.
\newblock Robust hedging of the lookback option.
\newblock {\em Finance and Stochastics.},  2 (1998) 329-347.

\bibitem{Ho11}
D.~Hobson.
\newblock The {S}korokhod embedding problem and model-independent bounds for
  option prices.
\newblock In {\em Paris-{P}rinceton {L}ectures on {M}athematical {F}inance
  2010}, volume 2003 of {\em Lecture Notes in Math.}, Springer,
  Berlin (2011)  267-318.

\bibitem{HoKl12}
D.~Hobson and M.~Klimmek.
\newblock Robust price bounds for the forward starting straddle.
\newblock {\em Finance and Stochastics.}, Volume 19, Issue 1 (2014) 189-214.


\bibitem{HoNe11}
D.~Hobson and A.~Neuberger.
\newblock Robust bounds for forward start options.
\newblock {\em Mathematical Finance.}, Volume 22, Issue 1 (2012) 31-56.

\bibitem{Lim}
T.~Lim.
\newblock Multi-martingale optimal transport.
\newblock https://arxiv.org/abs/1611.01496, (2016).

\bibitem{MTW}
X. N. Ma, N. S. Trudinger, and X. J. Wang. 
\newblock Regularity of potential functions of the optimal transportation problem. 
\newblock {\em Arch. Ration. Mech. Anal.}, 177 (2005), no. 2, 151-183.

\bibitem{Mo1781}
G.~Monge.
\newblock M\'emoire sur la th\'eorie des d\'eblais et des remblais.
\newblock {\em Histoire de l'acad\'emie {R}oyale des {S}ciences de {P}aris},
  (1781).

\bibitem{Obloj}
J. {\OB}.
\newblock The Skorokhod embedding problem and its offspring.
\newblock {\em Probab. Surveys},
 Volume 1 (2004), 321-392.  
 
 

\bibitem{St65}
V.~Strassen.
\newblock The existence of probability measures with given marginals.
\newblock {\em Ann. Math. Statist.}, 36 (1965) 423-439. 


\bibitem{sudakov}
V. N. Sudakov.
\newblock Geometric problems in the theory of infinite-dimensional probability distributions.
\newblock {\em Proc. Steklov Inst. Math.}, 141 (1979), 1-178.

\bibitem{TW1}
N. S. Trudinger and X. J. Wang. 
\newblock On the second boundary value problem for Monge-Amp{\`e}re type equations and optimal transportation.
\newblock {\em Ann. Sc. Norm. Super. Pisa Cl. Sci.}, (5), 8 (2009), no. 1, 143-174.

\bibitem{TW2}
N. S. Trudinger and X. J. Wang. 
\newblock On strict convexity and continuous differentiability of potential functions in optimal transportation. 
\newblock {\em Arch. Ration. Mech. Anal.}, 192 (2009), no. 3, 403-418.


\bibitem{Vi03}
C.~Villani.
\newblock {\em Topics in optimal transportation}, Volume~58, {\em Graduate
  Studies in Mathematics}.
\newblock American Mathematical Society, Providence, RI, (2003).

\bibitem{Vi09}
C.~Villani.
\newblock {\em Optimal Transport. Old and New}, Vol. 338, {\em Grundlehren
  der mathematischen Wissenschaften}.
\newblock Springer, (2009).

\bibitem{Z}
D.~Zaev.
\newblock On the Monge-Kantorovich problem with additional
linear constraints.
\newblock {\em Mathematical Notes.}, November 2015, Volume 98, Issue 5, pp 725-741.

\end{thebibliography}
\end{document}